
\documentclass[11pt]{article}
\usepackage{amsmath}
\usepackage{cases}
\usepackage{amssymb}
\usepackage{amsfonts}

\usepackage{bbm}
\usepackage{amsmath,amsthm,amssymb,amscd}
\usepackage{latexsym}
\usepackage{hyperref}
\usepackage[numbers,sort&compress]{natbib}
\usepackage{hypernat}
\allowdisplaybreaks
\def\r{\mathbb{R}}

\def\n{\mathbb{N}}


 \textwidth 15cm \textheight 22cm
\oddsidemargin 1cm \evensidemargin 1cm

\newtheorem{theorem}{Theorem}[section]
\newtheorem{corollary}[theorem]{Corollary}
\newtheorem{lemma}[theorem]{Lemma}

\newtheorem{definition}[theorem]{Definition}

\theoremstyle{definition} \theoremstyle{remark}
\numberwithin{equation}{section}

\begin{document}

\title{{\bf On completeness of the space of weighted pseudo almost automorphic functions\thanks{The work was supported by
the NSF of China (11461034), and the Program for Cultivating Young
Scientist of Jiangxi Province (20133BCB23009).}}}
\date{}
\author{Zhe-Ming Zheng, Hui-Sheng Ding\thanks{Corresponding author. E-mail address:
dinghs@mail.ustc.edu.cn (H.-S. Ding)} \\
{\small { \it College of Mathematics and Information Science,
Jiangxi Normal University}}\\{\small {\it Nanchang, Jiangxi 330022,
People's Republic of China }}}
\maketitle

\begin{abstract}
In this paper, we prove that for every $\rho\in \mathbb{U}_{\infty}$, the space of weighted pseudo almost automorphic functions is complete under the supremum norm. This gives an affirmative answer to a key and fundamental problem for weighted pseudo almost automorphic functions, and fills a gap in the proof of [J. Funct. Anal. 258, No. 1, 196-207 (2010)].

\textbf{Keywords:} complete, weighted pseudo almost automorphic, almost automorphic.

\textbf{2000 Mathematics Subject Classification:} 43A60.
\end{abstract}

\section{Introduction}

The notion of weighted pseudo almost automorphic functions is initiated
by Blot et al. \cite{blot-waa1}, which is an interesting
generalization of the classical almost automorphic functions introduced by Bochner \cite{bochner}, as
well as a generalization of the notion of weighted pseudo almost periodic
function introduced by Diagana \cite{diagana-wpap}.

Since the work of Blot et al. \cite{blot-waa1}, there has been of great interest for many
authors to investigate weighted pseudo almost automorphic functions and their applications to evolution equations (see, e.g., \cite{liu,chen-zhang,blot-waa2,ding2014,zhang}).

Especially, in \cite{liu}, the authors discuss the existence and uniqueness of weighted pseudo almost automorphic mild solution to
a class of semilinear evolution equation
$$x'(t)=A(t)x(t)+f(t,x(t))$$
in a Banach space. However, just as noted in Zbl 1194.47047, in the proof of \cite[Theorem 4.2]{liu}, the authors use the Banach contraction mapping principle, and thus, the completeness of the space of weighted pseudo almost automorphic functions is needed. In fact, to the best of our knowledge, all the results concerning the completeness of the space of weighted pseudo almost automorphic functions need some restrictive conditions on the weighted term $\rho$ (cf. \cite{blot-waa2,ding2014,zhang}). \textit{There is no proof} for the completeness of the space of weighted pseudo almost automorphic functions with no restrictive conditions on $\rho$ until now.

Thus, just as indicated in Zbl 1194.47047, there is a gap in the proof of \cite[Theorem 4.2]{liu}. In addition, the completeness of the space of weighted pseudo almost automorphic functions is very important for applications of such functions. However, due to the influence of
weighted term, this problem becomes very tricky. We refer the reader to \cite{blot-waa2,ding2014,zhang} for some recent results on the completeness of the space of weighted pseudo almost automorphic functions. But, as we noted in the above paragraph, all the earlier results need some restrictive conditions on the weighted term $\rho$.

In this paper, we aim to fill this gap in the proof of \cite[Theorem 4.2]{liu} and solve this key problem completely.

Throughout the rest of this paper, we denote by $\n$ the set of
positive integers, by $\mathbb{R}$ the set of real numbers, by $X$ a
Banach space, by $BC(\mathbb{R},X)$ the Banach space of all bounded
continuous functions $f:\r\to X$ with supremum norm, and by $\mathbb{U}$ the set of
functions (weights) $\rho:\mathbb{R}\to[0,+\infty),$ which is
locally integrable over $\mathbb{R}$. In addition, for $\rho\in
\mathbb{U}$ and $r>0$, we denote
$$
\mu(r,\rho):=\int_{-r}^{r}\rho(t)dt.$$
$$
\mathbb{U}_{\infty}:=\{\rho\in\mathbb{U}:\lim_{r\to+\infty}\mu(r,\rho)=+\infty\},
$$
and
$$
\mathbb{U}_{B}:=\{\rho\in\mathbb{U}_{\infty}:\rho \text{ is bounded
with }  \inf_{t\in\mathbb{R}}\rho(t)>0\}.
$$
Obviously,
$\mathbb{U}_{B}\subset\mathbb{U}_{\infty}\subset\mathbb{U}$, with
strict inclusions.

Next, let us recall some notions and basic results about almost automorphic type functions and almost periodic type functions (for more
details, see \cite{blot-waa1,zhang-book,diagana-wpap,gaston05}).

\begin{definition}A set $P \subset\r$ is called relatively dense in $\r$ if there exists a number $l>0$
such that $\forall a\in \mathbb{R}$, $[a,a+l]\bigcap
P\neq\varnothing.$
\end{definition}

\begin{definition} A continuous function $f:\r\to X$ is called almost periodic if for
every $\varepsilon>0$ there exists a relatively dense set
$P(\varepsilon,f)$ such that
$$\sup_{t\in\r}\|f(t+\tau)-f(t)\|<\varepsilon$$
for all $\tau\in P(\varepsilon,f)$. We denote the set of all such
functions by $AP(X)$.
\end{definition}

\begin{definition}\label{aa}
A continuous function $f:\mathbb{R}\rightarrow X$ is called almost
automorphic if for every real sequence $(s_{m})$, there exists a
subsequence $(s_{n})$ such that
$$g(t):=\lim_{n\rightarrow\infty}f(t+s_{n})$$
is well defined for each $t\in \mathbb{R}$ and
$$\lim_{n\rightarrow\infty}g(t-s_{n})=f(t)$$
for each $t\in \mathbb{R}$. Denote by $AA(X)$ the set of all such
functions.
\end{definition}

For each $\rho\in\mathbb{U}_{\infty}$, define
$$
M_{0}(X,\rho):=\{f\in
BC(\mathbb{R},X):\lim_{r\to+\infty}\frac{1}{\mu(r,\rho)}
\int_{-r}^{r}\|f(t)\|\rho(t)dt=0\}.
$$

\begin{definition}
 Let $\rho\in\mathbb{U}_{\infty}$. A
function $f\in BC(\mathbb{R},X)$ \ is called weighted pseudo almost
automorphic or $\rho$-pseudo almost automorphic if it can be
expressed as $f=g+h$, where $g\in AA(X)$ and $h \in M_{0}(X,\rho)$.
The set of such functions will be denoted by $PAA(X,\rho)$.
\end{definition}

\begin{definition}
 Let $\rho\in\mathbb{U}_{\infty}$. A
function $f\in BC(\mathbb{R},X)$ \ is called weighted pseudo almost
periodic or $\rho$-pseudo almost periodic if it can be expressed as
$f=g+h$, where $g\in AP(X)$ and $h \in M_{0}(X,\rho)$. The set of
such functions will be denoted by $PAP(X,\rho)$.
\end{definition}

\begin{definition}{\rm \cite{zhang}}
Let $\rho\in\mathbb{U}_{\infty}$. A set $C\subset\r$ is said to be a
$\rho$-ergodic zero set if
$$\lim_{r\rightarrow{+\infty}}\frac{\int_{[-r,r]\cap C}\rho(t)dt}{\mu(r,\rho)}=0.$$
\end{definition}

The following Lemma is due to \cite[Lemma 3.2]{ding} (see also
\cite{zhang}).

\begin{lemma}\label{lemma1}Let $f\in BC(\r,X)$ and $\rho\in \mathbb{U}_{\infty}$. Then $f\in M_{0}(X,\rho)$ if and only if for
every $\varepsilon>0$, $M_{\varepsilon}(f)$ is a $\rho$-ergodic zero
set, where $M_{\varepsilon}(f):=\{t\in\r:\|f(t)\|\geq
\varepsilon\}$.
\end{lemma}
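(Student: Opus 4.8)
The plan is to establish the two implications separately, using a Chebyshev/Markov-type inequality for necessity and a two-region decomposition of the integral for sufficiency. Before either direction, I would record two standing remarks. First, since $f$ is continuous, each set $M_\varepsilon(f)$ is closed, hence Lebesgue measurable, so every integral over $[-r,r]\cap M_\varepsilon(f)$ is well defined; and since $\rho(t)\geq0$, all such integrals are nonnegative and monotone under the set inclusions used below. Second, since $\rho\in\mathbb{U}_\infty$ we have $\mu(r,\rho)\to+\infty$, so $\mu(r,\rho)>0$ for all large $r$ and every quotient appearing below eventually makes sense.

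For the necessity direction, suppose $f\in M_0(X,\rho)$ and fix $\varepsilon>0$. On $M_\varepsilon(f)$ one has $\|f(t)\|\geq\varepsilon$, whence
$$\varepsilon\int_{[-r,r]\cap M_\varepsilon(f)}\rho(t)\,dt\leq\int_{[-r,r]\cap M_\varepsilon(f)}\|f(t)\|\rho(t)\,dt\leq\int_{-r}^{r}\|f(t)\|\rho(t)\,dt.$$
Dividing by $\varepsilon\,\mu(r,\rho)$ and letting $r\to+\infty$, the rightmost quotient tends to $0$ by the definition of $M_0(X,\rho)$; hence $M_\varepsilon(f)$ is a $\rho$-ergodic zero set.

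For the sufficiency direction, suppose every $M_\varepsilon(f)$ is a $\rho$-ergodic zero set. Here I would exploit the boundedness of $f$: set $M:=\sup_{t\in\r}\|f(t)\|<+\infty$. Fixing $\varepsilon>0$ and splitting $[-r,r]$ into its intersection with $M_\varepsilon(f)$ and the complementary set, I would bound $\|f(t)\|\leq M$ on the former and $\|f(t)\|<\varepsilon$ on the latter, obtaining
$$\frac{1}{\mu(r,\rho)}\int_{-r}^{r}\|f(t)\|\rho(t)\,dt\leq M\cdot\frac{\int_{[-r,r]\cap M_\varepsilon(f)}\rho(t)\,dt}{\mu(r,\rho)}+\varepsilon.$$
Letting $r\to+\infty$, the first term on the right vanishes because $M_\varepsilon(f)$ is a $\rho$-ergodic zero set, so $\limsup_{r\to+\infty}\frac{1}{\mu(r,\rho)}\int_{-r}^{r}\|f(t)\|\rho(t)\,dt\leq\varepsilon$. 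Since $\varepsilon>0$ is arbitrary, the limit is $0$, i.e. $f\in M_0(X,\rho)$.

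I do not expect a serious obstacle: the argument is essentially a Chebyshev inequality paired with a two-region decomposition, and it requires no structural hypothesis on $\rho$ beyond $\mu(r,\rho)\to+\infty$. The only points genuinely needing attention are the measurability remark for $M_\varepsilon(f)$ and the \emph{crucial} use of the boundedness of $f$ in the sufficiency direction, without which the uniform bound by $M$ on $M_\varepsilon(f)$ is unavailable and the implication can fail.
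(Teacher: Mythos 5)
Your proof is correct and complete: the Chebyshev-type estimate for necessity and the two-region splitting (using the boundedness of $f$) for sufficiency are exactly the standard argument for this equivalence. Note that the paper itself gives no proof of this lemma --- it is quoted from Lemma 3.2 of the cited reference of Ding, Long and N'Gu\'{e}r\'{e}kata --- so there is nothing in the paper to compare against, but your argument is the one that works and matches the standard proof in the literature.
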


\section{Completeness of $PAA(X,\rho)$}

\begin{theorem}\label{key-theorem} For every $f\in PAA(X,\rho)$, there exists a decomposition $f=g_0+h_0$, where $g_0\in AA(X)$ and $h_0\in M_0(X,\rho)$, such that
$$\|g_0\|\leq \|f\|.$$
\end{theorem}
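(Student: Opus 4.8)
The plan is to begin from an arbitrary decomposition $f=g+h$ with $g\in AA(X)$ and $h\in M_0(X,\rho)$, guaranteed by the definition of $PAA(X,\rho)$, and then to \emph{correct} the almost automorphic part by composing it with a radial retraction onto the closed ball of radius $\|f\|$. Setting $M=\|f\|$, I would define $R:X\to X$ by $R(x)=x$ when $\|x\|\leq M$ and $R(x)=Mx/\|x\|$ when $\|x\|>M$, and then put $g_0=R\circ g$ and $h_0=f-g_0$. By construction $\|g_0\|\leq M=\|f\|$ and $f=g_0+h_0$, so the entire task reduces to verifying the two membership claims $g_0\in AA(X)$ and $h_0\in M_0(X,\rho)$.

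For the almost automorphy of $g_0$, I would appeal directly to Definition \ref{aa}. Since $R$ is continuous on $X$, given a sequence $(s_m)$ and a subsequence $(s_n)$ along which $g(t+s_n)\to G(t)$ and $G(t-s_n)\to g(t)$ pointwise, continuity yields $R(g(t+s_n))\to R(G(t))$ and $R(G(t-s_n))\to R(g(t))$ pointwise, exhibiting $R\circ G$ as the limit function for $g_0=R\circ g$. Hence $g_0\in AA(X)$; notably only continuity of $R$, not uniform continuity, is required here.

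The heart of the argument is showing $h_0\in M_0(X,\rho)$. Writing $h_0=h+(g-g_0)$ and using that $M_0(X,\rho)$ is a linear subspace of $BC(\mathbb{R},X)$, it suffices to prove $g-g_0\in M_0(X,\rho)$. A direct computation gives the pointwise identity $\|g(t)-g_0(t)\|=(\|g(t)\|-M)^{+}$, so for each $\varepsilon>0$ the level set is $M_\varepsilon(g-g_0)=\{t:\|g(t)\|\geq M+\varepsilon\}$. The key inclusion is that on this set $\|g(t)\|\geq M+\varepsilon$ forces $\|h(t)\|=\|g(t)-f(t)\|\geq\|g(t)\|-\|f(t)\|\geq\varepsilon$, so $M_\varepsilon(g-g_0)\subseteq M_\varepsilon(h)$. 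Since $h\in M_0(X,\rho)$, Lemma \ref{lemma1} says $M_\varepsilon(h)$ is a $\rho$-ergodic zero set, and any measurable subset of a $\rho$-ergodic zero set is again one because $\rho\geq 0$. Thus $M_\varepsilon(g-g_0)$ is a $\rho$-ergodic zero set for every $\varepsilon>0$, and Lemma \ref{lemma1}, applied in the reverse direction, gives $g-g_0\in M_0(X,\rho)$, whence $h_0\in M_0(X,\rho)$.

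The conceptual obstacle being circumvented is that the almost automorphic part of a decomposition is \emph{not} unique and may have norm exceeding $\|f\|$, so one cannot simply retain the given $g$. The retraction $R$ resolves this by simultaneously bounding the norm and, through its continuity, preserving almost automorphy, while the level-set inclusion guarantees that the ``excess mass'' subtracted from $g$ is absorbed into the ergodic part already controlled by $h$. I expect the only points requiring care are confirming that composition with a merely continuous map preserves membership in $AA(X)$, and that the level-set comparison respects measurability, which holds since $g$ is continuous and hence each set $\{t:\|g(t)\|\geq M+\varepsilon\}$ is closed.
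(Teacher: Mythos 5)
Your proof is correct, and it arrives at \emph{the same} decomposition as the paper --- the paper's $g_0$ is precisely your $R\circ g$ --- but by a genuinely more direct route. The paper never names the radial retraction: it builds approximants $g_n$ by truncating $g$ at the levels $\|f\|+\frac{1}{n}$ on the sets $C_n=\{t:\|g(t)\|>\|f\|+\frac1n\}$, verifies by a three-case analysis that each $g_n$ is continuous and almost automorphic, shows $\{g_n\}$ is uniformly Cauchy, and then passes to the limit, invoking the closedness of $AA(X)$ and of $M_0(X,\rho)$ in $BC(\mathbb{R},X)$. The role of the offset $\frac1n$ there is exactly to obtain the inclusion $C_n\subset\{t:\|h(t)\|\geq\frac1n\}$; your argument absorbs that offset into the $\varepsilon$ of Lemma \ref{lemma1} via the identity $\|g(t)-g_0(t)\|=(\|g(t)\|-\|f\|)^{+}$ and the inclusion $M_\varepsilon(g-g_0)\subseteq M_\varepsilon(h)$, which is the same estimate in a cleaner package. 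Likewise, your observation that composition with the continuous map $R$ preserves almost automorphy (under the pointwise Definition \ref{aa}) replaces the paper's Steps I and II in one stroke, and you need no limiting argument at all. What the paper's version buys is self-containedness --- everything is checked by hand from the definition --- at the cost of length; what yours buys is brevity and a reusable principle (continuous superposition preserves $AA(X)$, subsets of $\rho$-ergodic zero sets are $\rho$-ergodic zero). The only points worth making explicit in a final write-up are that $g=f-h\in BC(\mathbb{R},X)$, so that Lemma \ref{lemma1} applies to $g-g_0$, and that $M_0(X,\rho)$ is a linear subspace; both are immediate.
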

\begin{proof}Let $f=g+h$, where $g\in AA(X)$ and $h\in M_0(X,\rho)$. Set
$$C_n=\{t\in\r: \|g(t)\|> \|f\|+\frac{1}{n}\},\quad n\in\n.$$
Noting that $C_n\subset \{t\in\r: \|h(t)\|\geq \frac{1}{n}\}$, it follows from Lemma \ref{lemma1} that every $C_n$ is a $\rho$-ergodic zero
set. Let
$$g_n(t)=\left\{\begin{array}{c@{\quad,\quad }l}
g(t) & t\notin C_n,\\[0.36cm]
(\|f\|+\frac{1}{n})\cdot \frac{g(t)}{\|g(t)\|} & t\in C_n.
\end{array}\right. $$
Next, we show that $g_n$ have the following three properties:

I. For every $n\in\n$, $g_n$ is continuous on $\r$.

Let $t_0\in \r$ and $t_k\to t_0$. We divide the proof of $g_n(t_k)\to g_n(t_0)$ into three cases.

Case (i): if $\|g(t_0)\|>\|f\|+\frac{1}{n}$, then $\|g(t_k)\|>\|f\|+\frac{1}{n}$ for sufficiently large $k$ since $g$ is continuous, and thus $t_k\in C_n$ for sufficiently large $k$. Also, in this case, $t_0\in C_n$. Then, it follows that
$$\|g_n(t_k)- g_n(t_0)\|=(\|f\|+\frac{1}{n})\cdot \left\|\frac{g(t_k)}{\|g(t_k)\|} - \frac{g(t_0)}{\|g(t_0)\|} \right\|\to 0.$$

Case (ii): if $\|g(t_0)\|<\|f\|+\frac{1}{n}$, then $\|g(t_k)\|<\|f\|+\frac{1}{n}$ for sufficiently large $k$ since $g$ is continuous, and thus $t_k\notin C_n$ for sufficiently large $k$. Also, in this case, $t_0\notin C_n$. Then, it follows that
$$\|g_n(t_k)- g_n(t_0)\|=\|g(t_k)- g(t_0)\|\to 0.$$

Case (iii): if $\|g(t_0)\|=\|f\|+\frac{1}{n}$, then $t_0\notin C_n$. Noting that $\|g(t_k)\|\to\|f\|+\frac{1}{n}$, we have
$$\|g_n(t_k)- g_n(t_0)\|=\|g(t_k)- g(t_0)\|\cdot[1-\chi _{C_n}(t_k)]+\left\|(\|f\|+\frac{1}{n})\cdot \frac{g(t_k)}{\|g(t_k)\|}-g(t_0)\right\|\cdot \chi_{C_n}(t_k)\to 0,$$
where $\chi_{C_n}$ is the characteristic function on $C_n$, i.e., $\chi_{C_n}(t)=1$ for all $t\in C_n$ and $\chi_{C_n}(t)=0$ for all $t\notin C_n$.

II. For every $n\in\n$, $g_n\in AA(X)$.

Taking an arbitrary sequence $s'_k$, there exist a subsequence $s_k$ and a function $\overline{g}:\r\to X$ such that $g(t+s_k)\to \overline{g}(t)$ and
$\overline{g}(t-s_k)\to g(t)$ for all $t\in\r$. Next, for every fixed $t\in\r$, we show that $g_n(t+s_k)\to \overline{g}_n(t)$, where
$$\overline{g}_n(t)=\left\{\begin{array}{c@{\quad,\quad }l}
\overline{g}(t) & \|\overline{g}(t)\|\leq \|f\|+\frac{1}{n},\\[0.36cm]
(\|f\|+\frac{1}{n})\cdot \frac{\overline{g}(t)}{\|\overline{g}(t)\|} & \|\overline{g}(t)\|>\|f\|+\frac{1}{n}.
\end{array}\right. $$
If $\|\overline{g}(t)\|>\|f\|+\frac{1}{n}$, then for sufficiently large $k$, $\|g(t+s_k)\|>\|f\|+\frac{1}{n}$ and thus $t+s_k\in C_n$. Then, it follows that
$$g_n(t+s_k)=(\|f\|+\frac{1}{n})\cdot \frac{g(t+s_k)}{\|g(t+s_k)\|}\to (\|f\|+\frac{1}{n})\cdot \frac{\overline{g}(t)}{\|\overline{g}(t)\|}=\overline{g}_n(t).$$
If $\|\overline{g}(t)\|<\|f\|+\frac{1}{n}$, then for sufficiently large $k$, $\|g(t+s_k)\|<\|f\|+\frac{1}{n}$ and thus $t+s_k\notin C_n$. Then, it is easy to see that
$$g_n(t+s_k)=g(t+s_k)\to \overline{g}(t)=\overline{g}_n(t).$$
If $\|\overline{g}(t)\|=\|f\|+\frac{1}{n}$, then $\overline{g}_n(t)=\overline{g}(t)$, and
\begin{eqnarray*}
&&\|g_n(t+s_k)-\overline{g}_n(t)\|\\&=& \|g_n(t+s_k)-\overline{g}(t)\|\\
&=& \|g(t+s_k)-\overline{g}(t)\|\cdot[1-\chi_{C_n}(t+s_k)]+\left\|(\|f\|+\frac{1}{n})\cdot \frac{g(t+s_k)}{\|g(t+s_k)\|} -\overline{g}(t)\right\|\cdot \chi_{C_n}(t+s_k)\\
&\to& 0,
\end{eqnarray*}
i.e., $g_n(t+s_k)\to\overline{g}_n(t)$. By a similar proof, one can also show that $\overline{g}_n(t-s_k)\to g_n(t)$.

III. $g_n(t)$ is uniformly convergent on $\r$.

For all $n>m$, noting $C_m\subset C_n$, we have
$$\|g_n(t)-g_m(t)\|=\left\{\begin{array}{c@{\quad,\quad }l}
0 & t\notin C_n,\\[0.36cm]
\frac{1}{m}-\frac{1}{n}<\frac{1}{m} & t\in C_m,\\[0.36cm]
\left|\|f\|+\frac{1}{n}-\|g(t)\|\right|\leq \frac{1}{m}-\frac{1}{n}<\frac{1}{m} & t\in C_n,\ t\notin C_m.
\end{array}\right. $$
So $\{g_n\}$ is a Cauchy sequence in $BC(\r,X)$.

Let $g_0=\lim_{n\to\infty}g_n$. Then it follows from I-III that $g_0\in AA(X)$ since $AA(X)$ is a closed subspace of $BC(\r,X)$. Moreover, noting that $\|g_n\|\leq \|f\|+\frac{1}{n}$, we have
$\|g_0\|\leq \|f\|$. Let $h_n=f-g_n$ and $h_0=\lim_{n\to\infty}h_n$. Then $f=g_0+h_0$. Moreover, for every fixed $n\in\n$, $h_n\in BC(\r,X)$ and $h_n(t)=h(t)$ for all $t\notin C_n$. Then, for every $\varepsilon>0$
$$\{t\in\r:\|h_n(t)\|\geq \varepsilon\}\subset \{t\in\r:\|h(t)\|\geq \varepsilon\}\cup C_n $$
Noting that $\{t\in\r:\|h(t)\|\geq \varepsilon\}$ and $C_n$ are both $\rho$-ergodic zero sets, we conclude that $\{t\in\r:\|h_n(t)\|\geq \varepsilon\}$ is also a $\rho$-ergodic zero set, and thus by Lemma \ref{lemma1}, $h_n\in M_0(X,\rho)$. Then, we have $h_0\in M_0(X,\rho)$ since $M_0(X)$ is also a closed subspace of $BC(\r,X)$. This completes the proof.
\end{proof}

\begin{corollary}\label{ap}For every $f\in PAP(X,\rho)$, there exists a decomposition $f=g_0+h_0$, where $g_0\in AP(X)$ and $h_0\in M_0(X,\rho)$, such that
$\|g_0\|\leq \|f\|.$
\end{corollary}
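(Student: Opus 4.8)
The plan is to run the argument of Theorem~\ref{key-theorem} essentially verbatim, replacing only the place where almost automorphy is used by the corresponding property for almost periodic functions. So I would start from a decomposition $f=g+h$ with $g\in AP(X)$ and $h\in M_0(X,\rho)$, define the same sets $C_n=\{t\in\r:\|g(t)\|>\|f\|+\frac1n\}$ and the same truncations $g_n$. The only genuinely new point is to upgrade ``$g_n\in AA(X)$'' to ``$g_n\in AP(X)$''; everything else (continuity of $g_n$, the uniform Cauchy estimate of step~III, the passage to $g_0=\lim_{n\to\infty} g_n$, the bound $\|g_0\|\le\|f\|$, and the verification $h_0=f-g_0\in M_0(X,\rho)$ via Lemma~\ref{lemma1}) transfers word for word, since none of those arguments used almost automorphy.

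To see $g_n\in AP(X)$ I would observe that $g_n=\phi_n\circ g$, where $\phi_n:X\to X$ is the radial retraction onto the closed ball of radius $\|f\|+\frac1n$, i.e. $\phi_n(x)=x$ when $\|x\|\le\|f\|+\frac1n$ and $\phi_n(x)=(\|f\|+\frac1n)\,x/\|x\|$ otherwise. This $\phi_n$ is uniformly continuous on all of $X$ (in fact Lipschitz). Combined with the almost periodicity of $g$, this yields $g_n\in AP(X)$ by the standard composition theorem: the composition of an almost periodic function with a uniformly continuous map is again almost periodic.

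If one prefers to keep the proof self-contained and strictly parallel to Theorem~\ref{key-theorem}, the composition step can be spelled out through Bochner's criterion. Given a sequence $(s_k')$, extract $(s_k)$ such that $g(\cdot+s_k)$ converges \emph{uniformly} on $\r$ to some $\overline g$; this uniformity is exactly what distinguishes $AP(X)$ from $AA(X)$. Since $\phi_n$ is uniformly continuous, $g_n(\cdot+s_k)=\phi_n(g(\cdot+s_k))$ then converges uniformly to $\phi_n(\overline g)=\overline g_n$, and likewise $\overline g_n(\cdot-s_k)\to g_n$ uniformly; by Bochner's characterization $g_n\in AP(X)$.

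The main obstacle is precisely this almost periodicity of the truncated function $g_n$. In the proof of Theorem~\ref{key-theorem}, the property $g_n\in AA(X)$ was verified by a delicate pointwise case analysis (splitting according to whether $\|\overline g(t)\|$ is greater than, less than, or equal to $\|f\|+\frac1n$), and for the present corollary one must ensure that the stronger \emph{uniform} convergence required by $AP(X)$ survives the truncation. Recognizing $g_n$ as the composition $\phi_n\circ g$ with a uniformly continuous $\phi_n$ is the clean way around this, since uniform continuity propagates the uniform convergence of the translates and no case analysis is needed. After that, closedness of $AP(X)$ and of $M_0(X,\rho)$ in $BC(\r,X)$ finishes the proof exactly as before.
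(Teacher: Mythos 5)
Your proof is correct and follows essentially the same route as the paper: only step II of Theorem~\ref{key-theorem} needs modification, and the paper verifies $g_n\in AP(X)$ by a case analysis on whether $t$ and $t+\tau$ lie in $C_n$, which amounts exactly to your observation that the radial retraction $\phi_n$ is $2$-Lipschitz, yielding $\|g_n(t+\tau)-g_n(t)\|\le 2\|g(t+\tau)-g(t)\|$ for $\tau\in P(\varepsilon,g)$. Your packaging via $g_n=\phi_n\circ g$ and the composition/Bochner argument is a slightly cleaner presentation of the same estimate, and the rest of the argument transfers verbatim as you say.
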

\begin{proof}We only need to modify the proof of II in Theorem \ref{key-theorem}. In fact, it suffices to show that for every $\varepsilon>0$,
$$\|g_n(t+\tau)-g_n(t)\|<\varepsilon,\quad t\in\r,\ \tau\in P(\varepsilon,g).$$
Fix $n\in\n$. For all $t,t+\tau\notin C_n$, it is easy to see that $$\|g_n(t+\tau)-g_n(t)\|=\|g(t+\tau)-g(t)\|<\varepsilon.$$
For all $t,t+\tau\in C_n$, we have
$$\|g_n(t+\tau)-g_n(t)\|\leq \frac{\Big\|g(t+\tau)\cdot \|g(t)\|-\|g(t+\tau)\|\cdot g(t)\Big\|}{\|g(t)\|}\leq 2\|g(t+\tau)-g(t)\|<2\varepsilon.$$
For all $t\notin C_n$ and $t+\tau\in C_n$, we have
$$\|g(t)\|\leq \|f\|+\frac{1}{n}<\|g(t+\tau)\|,$$
and
\begin{eqnarray*}
\|g_n(t+\tau)-g_n(t)\|&=& \Big\| \frac{\|f\|+\frac{1}{n}}{\|g(t+\tau)\|}\cdot g(t+\tau)-g(t)\Big\|\\
&\leq& \|g(t+\tau)-g(t)\|+\Big|\|f\|+\frac{1}{n}-\|g(t+\tau)\|\Big|\\
&\leq& 2\|g(t+\tau)-g(t)\|<2\varepsilon.
\end{eqnarray*}
For all $t\in C_n$ and $t+\tau\notin C_n$, the proof is similar.
\end{proof}

\begin{theorem}For every $\rho\in \mathbb{U}_{\infty}$, $PAA(X,\rho)$ is a Banach space under the supremum norm.
\end{theorem}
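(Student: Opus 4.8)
The plan is to show that $PAA(X,\rho)$ is a closed subspace of the Banach space $BC(\r,X)$; completeness then follows at once. So I would take a Cauchy sequence $\{f_k\}\subset PAA(X,\rho)$, which necessarily converges to some $f\in BC(\r,X)$, and aim to prove $f\in PAA(X,\rho)$. Since a Cauchy sequence converges as soon as one of its subsequences does, I would first pass to a subsequence (still denoted $f_k$) satisfying $\|f_{k+1}-f_k\|<2^{-k}$ for all $k\in\n$.

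The decisive move is to decompose the \emph{differences} rather than the functions themselves, exploiting the norm control furnished by Theorem \ref{key-theorem}. Each difference $f_{k+1}-f_k$ lies in $PAA(X,\rho)$, so by Theorem \ref{key-theorem} I can write $f_{k+1}-f_k=u_k+v_k$ with $u_k\in AA(X)$, $v_k\in M_0(X,\rho)$, and $\|u_k\|\le\|f_{k+1}-f_k\|<2^{-k}$. Fixing an arbitrary decomposition $f_1=g_1+h_1$, the telescoping identity
$$f_k=g_1+\sum_{j=1}^{k-1}u_j+h_1+\sum_{j=1}^{k-1}v_j$$
splits $f_k$ into an almost automorphic part and an ergodic part. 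Because $\|u_j\|<2^{-j}$, the series $\sum_j u_j$ converges uniformly, so $G:=g_1+\sum_{j=1}^{\infty}u_j$ defines an element of $AA(X)$ (using that $AA(X)$ is closed in $BC(\r,X)$). Moreover $\|v_j\|\le\|f_{j+1}-f_j\|+\|u_j\|<2^{-j+1}$, so $\sum_j v_j$ also converges uniformly and $H:=h_1+\sum_{j=1}^{\infty}v_j\in M_0(X,\rho)$ by closedness of $M_0(X,\rho)$.

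It then follows that $f_k\to G+H$ in $BC(\r,X)$; since also $f_k\to f$, uniqueness of limits gives $f=G+H\in PAA(X,\rho)$, proving closedness and hence completeness.

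The main obstacle—and the reason earlier arguments stalled without extra hypotheses on $\rho$—is that the decomposition $f=g+h$ is not unique, so decomposing each $f_k$ directly produces component sequences $\{g_k\},\{h_k\}$ with no reason to be Cauchy; the components may oscillate even as $f_k$ converges. Theorem \ref{key-theorem} is exactly the tool that breaks this impasse: applying it to the \emph{small} differences $f_{k+1}-f_k$ yields an almost automorphic part whose norm is dominated by $\|f_{k+1}-f_k\|$, which forces both telescoping series to converge. Everything else reduces to the routine facts that $BC(\r,X)$ is complete and that $AA(X)$ and $M_0(X,\rho)$ are closed subspaces of it.
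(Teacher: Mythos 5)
Your proposal is correct and follows essentially the same route as the paper: both pass to a subsequence with $\|f_{k+1}-f_k\|\le 2^{-k}$, apply Theorem \ref{key-theorem} to the differences to obtain almost automorphic parts with controlled norms, and sum up (your telescoping series is exactly the paper's inductive construction of the component sequences $\{g_{n_k}\}$, $\{h_{n_k}\}$ in partial-sum form). Your closing remark about why decomposing each $f_k$ directly fails is precisely the motivation behind the paper's argument.
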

\begin{proof}Let $\{f_n\}$ be a Cauchy sequence in $PAA(X,\rho)$. Then, we can choose a subsequence $\{f_{n_k}\}$ such that
$$\|f_{n_{k+1}}-f_{n_k}\|\leq \frac{1}{2^k}.$$
Let $f_{n_1}=g_{n_1}+h_{n_1}$, where $g_{n_1}\in AA(X)$ and $h_{n_1}\in M_0(X,\rho)$. By applying Theorem \ref{key-theorem} on $f_{n_2}-f_{n_1}$, there exist $g_0\in AA(X) $ and $h_0\in  M_0(X,\rho)$ such that $f_{n_2}-f_{n_1}=g_0+h_0$ and
$$\|g_0\|\leq \|f_{n_2}-f_{n_1}\|.$$
Let $g_{n_2}=g_0+g_{n_1}$ and $h_{n_2}=h_0+h_{n_1}$. Then $g_{n_2}\in AA(X)$, $h_{n_2}\in M_0(X,\rho)$, $f_{n_2}=g_{n_2}+h_{n_2}$ and
$$\|g_{n_2}-g_{n_1}\|=\|g_0\|\leq \|f_{n_2}-f_{n_1}\|\leq \frac{1}{2}.$$
Continuing by this way, one can get two sequences $\{g_{n_k}\}\subset AA(X)$ and $\{h_{n_k}\}\subset M_0(X,\rho)$ such that for all $k\in\n$, $f_{n_k}=g_{n_k}+h_{n_k}$ and
$$\|g_{n_{k+1}}-g_{n_k}\|\leq \|f_{n_{k+1}}-f_{n_k}\|\leq \frac{1}{2^k}.$$
Thus, both $\{g_{n_k}\}$ and $\{h_{n_k}\}$ are Cauchy sequences. Let
$$\lim_{k\to \infty}g_{n_k}=g,\quad \lim_{k\to \infty}h_{n_k}=h,$$
and $f=g+h$. Then, $g\in AA(X)$, $h\in M_0(X,\rho)$, $f\in PAA(X,\rho)$, and $f_{n_k}\to f$. In view of $\{f_n\}$ being a Cauchy sequence, we conclude $\lim\limits_{n\to\infty}f_n=f$. This shows that $PAA(X,\rho)$ is a Banach space under the supremum norm.
\end{proof}

By using Corollary \ref{ap}, we can get a similar result for $PAP(X,\rho)$:
\begin{corollary}For every $\rho\in \mathbb{U}_{\infty}$, $PAP(X,\rho)$ is a Banach space under the supremum norm.
\end{corollary}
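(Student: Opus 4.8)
The plan is to mirror, almost verbatim, the argument just used to prove that $PAA(X,\rho)$ is a Banach space, replacing the almost automorphic ingredients by their almost periodic counterparts. The only structural input needed beyond the preceding theorem is Corollary \ref{ap}, which supplies a norm-controlled almost periodic decomposition, together with two closedness facts: that $AP(X)$ is a closed subspace of $BC(\r,X)$ (a classical property of almost periodic functions) and that $M_0(X,\rho)$ is closed in $BC(\r,X)$ (already invoked in the proof of Theorem \ref{key-theorem}).

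Concretely, I would start from an arbitrary Cauchy sequence $\{f_n\}$ in $PAP(X,\rho)$ and pass to a subsequence $\{f_{n_k}\}$ with $\|f_{n_{k+1}}-f_{n_k}\|\le 2^{-k}$. Writing $f_{n_1}=g_{n_1}+h_{n_1}$ for some fixed initial decomposition with $g_{n_1}\in AP(X)$ and $h_{n_1}\in M_0(X,\rho)$, I would then apply Corollary \ref{ap} to each difference $f_{n_{k+1}}-f_{n_k}$ to obtain a decomposition $f_{n_{k+1}}-f_{n_k}=u_k+v_k$ with $u_k\in AP(X)$, $v_k\in M_0(X,\rho)$, and the crucial bound $\|u_k\|\le\|f_{n_{k+1}}-f_{n_k}\|\le 2^{-k}$. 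Setting $g_{n_{k+1}}=g_{n_k}+u_k$ and $h_{n_{k+1}}=h_{n_k}+v_k$ yields, for every $k$, a decomposition $f_{n_k}=g_{n_k}+h_{n_k}$ with $g_{n_k}\in AP(X)$ and $h_{n_k}\in M_0(X,\rho)$.

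The point of building the decomposition telescopically, rather than decomposing each $f_{n_k}$ independently, is that the norm control transfers to the components: $\|g_{n_{k+1}}-g_{n_k}\|=\|u_k\|\le 2^{-k}$, so $\{g_{n_k}\}$ is Cauchy in $AP(X)$, and consequently $\{h_{n_k}\}=\{f_{n_k}-g_{n_k}\}$ is Cauchy in $M_0(X,\rho)$. By the two closedness facts, these sequences converge to some $g\in AP(X)$ and $h\in M_0(X,\rho)$; hence $f:=g+h\in PAP(X,\rho)$ and $f_{n_k}\to f$. Finally, since $\{f_n\}$ is itself Cauchy and admits the convergent subsequence $\{f_{n_k}\}$, the whole sequence converges to $f$, which establishes completeness.

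I do not anticipate a genuine obstacle here: the entire difficulty of the problem has already been absorbed into Corollary \ref{ap} (equivalently, into Theorem \ref{key-theorem}), whose estimate $\|g_0\|\le\|f\|$ is precisely what rescues the component sequences from the failure of a naive, uncontrolled decomposition. The remaining steps form the standard ``Cauchy subsequence plus closed complementary pieces'' argument, so the only care needed is to sum the telescoped almost periodic parts in $AP(X)$ and the ergodic parts in $M_0(X,\rho)$, both of which are closed under the supremum norm.
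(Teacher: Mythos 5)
Your proposal is correct and follows exactly the route the paper intends: the paper states this corollary with the one-line remark that it follows from Corollary \ref{ap} by repeating the completeness argument for $PAA(X,\rho)$, which is precisely the telescoped, norm-controlled decomposition you carry out. No issues.
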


\section{Acknowledgements}
The authors are grateful to the anonymous reviewers for their professional comments and suggestions, which improve greatly the quality of this paper.

\end{document}